\newtheorem{Theorem}{Theorem}[section]
\newtheorem{Proposition}[Theorem]{Proposition}
\newtheorem{Definition}[Theorem]{Definition}
\numberwithin{equation}{section}
\begin{document}
\sloppy
\title[Riemann integrability in normed spaces]
{On the Riemann integrability of the norm of a path in normed spaces}

\author[B. \'Alvarez-Samaniego]{Borys \'Alvarez-Samaniego}
\address{\vspace{-8mm}N\'ucleo de Investigadores Cient\'{\i}ficos\\
	    Facultad de Ciencias\\
	    Universidad Central del Ecuador (UCE)\\
    	Quito, Ecuador}
\email{balvarez@uce.edu.ec, borys\_yamil@yahoo.com}

\author[W. P. \'Alvarez-Samaniego]{Wilson P. \'Alvarez-Samaniego}
\address{\vspace{-8mm}N\'ucleo de Investigadores Cient\'{\i}ficos\\
        Facultad de Ciencias\\
	    Universidad Central del Ecuador (UCE)\\
    	Quito, Ecuador}
\email{wpalvarez@uce.edu.ec, alvarezwilson@hotmail.com}

\author[L. Rivera]{Luis Rivera}
\address{\vspace{-8mm}N\'ucleo de Investigadores Cient\'{\i}ficos\\
        Facultad de Ciencias\\
    	Universidad Central del Ecuador (UCE)\\
    	Quito, Ecuador}
\email{ldriverab@uce.edu.ec}

\date{July 11, 2023.} 
%\thanks{Data sharing not applicable to this article as no datasets were 
%generated or analysed during the current study.}

\begin{abstract}
A useful result is that if a bounded complex-valued path is Riemann-integrable, 
then its modulus is also Riemann-integrable. The extension of this last result to 
bounded paths taking values in a normed space is affirmed, as being true, in 
\cite{kolmogorov}. However, we show that if a bounded path taking values in a normed 
space is barely Riemann-integrable, then it is not really guaranteed that the norm 
of this path is also Riemann-integrable.
\end{abstract}

\subjclass[2020]{26E20; 26A42}
\keywords{Riemann-integrable; normed space}

\maketitle
%%%%%%%%%%%%%%%%%%%%%%%%%%%%%%%%%%%%%%%%%%%%%%%%%%%%%%%%%%%%%%%%%%%%%%%%%%%%%%%%%%%%%%%%
%%%%%%%%%%%%%%%%%%%%%%%%%%%%%%%%%%%%%%%%%%%%%%%%%%%%%%%%%%%%%%%%%%%%%%%%%%%%%%%%%%%%%%%%
For notational convenience, we write $\mathbb{Q}$ to represent the set 
of rational numbers. The symbols $\mathbb{R}$, and $\mathbb{C}$, 
are used to denote the sets of real and complex numbers, respectively, 
$\mathbb{N}:=\{0,1,2, \ldots\}$ is the set of all natural numbers. 
The cardinality of the set $C$ is denoted by $\#C$. Furthermore, if 
$\emptyset\neq A\subset X$, we write $\chi_{A}:X\longrightarrow \mathbb{R}$  
to denote the \emph{characteristic function} of the subset $A$, which maps
every element of $A$ to $1$, and all other element to $0$. 

We recall (see \cite{castro}) that a \textit{partition}, $\mathcal{P}$, of a 
compact interval $[a,b]\subset \mathbb{R}$, is a nonempty finite collection 
$\mathcal{P} := \{I_{1},\ldots, I_{\#\mathcal{P}}\}$ of mutually disjoint intervals
such that $I_1 := [a, b]$, if $\#\mathcal{P} = 1$, and $I_1 := [x_0, x_1)$, 
$\ldots$, $I_{\#\mathcal{P}} := [x_{\#\mathcal{P}-1}, x_{\#\mathcal{P}}]$, 
where $x_0:=a$, $x_{\#\mathcal{P}}:=b$, and $x_0 \le \ldots \le x_{\#\mathcal{P}}$, 
if $\#\mathcal{P} \in \{ 2, 3, \ldots\}$. The \textit{diameter} of a partition,  
$\mathcal{Q}$, of a compact interval, denoted by $\text{diam}(\mathcal{Q})$, is the 
maximum of the lengths of the elements of $\mathcal{Q}$. Next, we give a classical 
definition.
%%%%%%%%%%%%%%%%%%%%%%%%%%%%%%%%%%%%%%%%%%%%%%%%%%%%%%%%%%%%%%%%%%%%%%%%%%%%%%%%%%%%%%%
%%%%%%%%%%%%%%%%%%%%%%%%%%%%%%%%%%%%%%%%%%%%%%%%%%%%%%%%%%%%%%%%%%%%%%%%%%%%%%%%%%%%%%%
\begin{Definition}[Riemann integral, \cite{castro}]
    Let $I:=[a,b]\subset \mathbb{R}$ be a compact interval, and let 
    $f:I\longrightarrow E$ be a bounded path, i.e., a bounded function, 
    where $(E,|\cdot|)$ is a normed space over the real or complex numbers. 
    If there exists $A\in E$ such that for all $\epsilon>0$, there is a $\delta>0$
    such that for any partition $\mathcal{P}$ of $I$, with 
    $\mathcal{P}:=\{I_{1},\ldots,I_{\#\mathcal{P}}\}$, and for every finite set 
    $\{x_{1},\ldots,x_{\#\mathcal{P}}\}$ such that for all 
    $j\in \{1,\ldots,\#\mathcal{P}\}$, $x_{j}\in I_{j}$, 
    \[
        0\leq \text{diam}(\mathcal{P})<\delta 
        \Rightarrow 
        \left|\sum_{j=1}^{\#\mathcal{P}}f(x_{j})\cdot \text{vol}(I_{j})-A\right|
        <\epsilon,
    \]
    where $\text{vol}$ is the length of the interval under consideration, we say 
    that the function $f:I\longrightarrow E$ is \textit{Riemann-integrable} and 
    \[
        \int_{a}^{b}f(x)dx = A\in E. 
    \]
\end{Definition}
%%%%%%%%%%%%%%%%%%%%%%%%%%%%%%%%%%%%%%%%%%%%%%%%%%%%%%%%%%%%%%%%%%%%%%%%%%%%%%%%%%%%%%%
%%%%%%%%%%%%%%%%%%%%%%%%%%%%%%%%%%%%%%%%%%%%%%%%%%%%%%%%%%%%%%%%%%%%%%%%%%%%%%%%%%%%%%%
We now proceed to prove the main result of this manuscript.
\begin{Proposition}
    There exist a normed space, $(E,|\cdot|)$, and a bounded path, 
    $f:[0,1]\longrightarrow E$, such that $f:[0,1]\longrightarrow E$ 
    is Riemann-integrable, but $|f|:[0,1]\longrightarrow [0,+\infty)$ 
    is not Riemann-integrable.
\end{Proposition}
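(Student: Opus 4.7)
The plan is to take an infinite-dimensional sequence space and exploit the sup norm to make cancellation-by-smallness happen at the level of the vector-valued Riemann sums, while still forcing the norm of $f$ to be the Dirichlet function. Concretely, I would let $E := c_{0}$ (sequences of real numbers tending to zero) equipped with the supremum norm, denoting its canonical basis by $(e_{n})_{n\in\mathbb{N}}$. Fix an enumeration $\mathbb{Q}\cap[0,1]=\{q_{0},q_{1},q_{2},\ldots\}$ of the rationals in $[0,1]$, and define
\[
    f:[0,1]\longrightarrow E,\qquad
    f(x):=\begin{cases} e_{n}, & x=q_{n}\in\mathbb{Q}\cap[0,1],\\ 0, & x\notin\mathbb{Q}.\end{cases}
\]
Since $|f(x)|\in\{0,1\}$, the path is bounded, and $|f|=\chi_{\mathbb{Q}\cap[0,1]}$ is precisely the Dirichlet function.

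Next I would verify that $f$ itself is Riemann-integrable with integral $A=0\in E$. The key observation is this: for any partition $\mathcal{P}=\{I_{1},\ldots,I_{\#\mathcal{P}}\}$ and any choice of tags $x_{j}\in I_{j}$, each rational $q_{n}$ can serve as a tag for at most one $j$, because the intervals $I_{j}$ are mutually disjoint. Therefore, coordinate by coordinate,
\[
    \left(\sum_{j=1}^{\#\mathcal{P}} f(x_{j})\,\text{vol}(I_{j})\right)_{n}
    = \text{vol}(I_{j_{n}})\,\mathbbm{1}_{\{x_{j_{n}}=q_{n}\}}
\]
for the (at most one) index $j_{n}$ such that $x_{j_{n}}=q_{n}$, and zero otherwise. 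The supremum norm of the Riemann sum is then bounded above by $\max_{j}\text{vol}(I_{j})=\text{diam}(\mathcal{P})$. Given $\epsilon>0$, choosing $\delta:=\epsilon$ forces every Riemann sum of mesh less than $\delta$ to have norm less than $\epsilon$, establishing $\int_{0}^{1}f(x)\,dx=0$.

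Finally, I would show that $|f|=\chi_{\mathbb{Q}\cap[0,1]}$ is not Riemann-integrable by the standard density argument: on any partition $\mathcal{P}$ of $[0,1]$, tagging every interval with a rational point yields a Riemann sum equal to $1$, whereas tagging every interval with an irrational point yields $0$, so no limit $A\in\mathbb{R}$ can exist as the diameter shrinks.

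I do not anticipate a genuine obstacle here; the only step requiring some care is the Riemann-integrability of $f$, and the reason it goes through is the peculiar way the supremum norm in $c_{0}$ separates distinct basis directions. The same argument would fail in a space such as $\ell^{1}$, where the norms of the basis vectors add up rather than maximize, which is precisely why the classical complex-valued result does not extend to arbitrary normed spaces.
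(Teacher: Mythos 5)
Your proposal is correct and is essentially the paper's own argument: the paper takes $E=\ell^{\infty}(\mathbb{N})$ and sends the $k$-th rational to $(\chi_{\{k\}}(n))_{n\in\mathbb{N}}$, which is exactly your canonical basis vector $e_{k}$, and proves integrability by the same observation that disjointness of the partition intervals forces each coordinate of the Riemann sum to be at most the mesh. The only cosmetic differences are your choice of $c_{0}$ in place of $\ell^{\infty}$ and your use of the two-tagging oscillation argument for the Dirichlet function where the paper invokes Lebesgue's criterion; neither affects the substance.
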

\begin{proof}
We consider the Banach space 
$(\ell^{\infty} (\mathbb{N}),|\cdot|_{\infty})$, see \cite{iorio}, where 
\begin{equation*}
 \ell^{\infty} (\mathbb{N}) := \{ \alpha := (\alpha_n)_{n \in \mathbb{N}} \in 
 {\mathbb{C}}^{\mathbb{N}} ;  | \alpha |_{\infty} 
 := \sup_{n \in \mathbb{N}} |\alpha_n| < + \infty \}.
\end{equation*}
Let $\mathbb{Q}=\{r_{0},r_{1},\ldots\} \subset \mathbb{R}$ be an enumeration of 
the set of rational numbers, where for all $i, j\in \mathbb{N}$ such that 
$i \neq j$, $r_{i}\neq r_{j}$. We now take the function
\begin{eqnarray*}
    F:[0,1] &\longrightarrow& \ell^{\infty}(\mathbb{N})\\
    t &\longmapsto& F(t) := 
    \begin{cases}
        (\chi_{\{k\}}(n))_{n\in \mathbb{N}} &\text{, if } t=r_{k}, \text{ with } 
        k\in \mathbb{N},\\
        (0)_{n\in \mathbb{N}} &\text{, if } t\in [0,1]\setminus \mathbb{Q}. 
    \end{cases}
\end{eqnarray*}
\begin{itemize}
    \item It is immediate to see that 
    $F:[0,1]\longrightarrow \ell^{\infty}(\mathbb{N})$ is well-defined. 
    \item Let $t\in [0,1]$. If $t\in \mathbb{Q}$, then
    there exists $m:=m_{t}\in \mathbb{N}$ such that $t=r_{m}$, and
    $|F(t)|_{\infty}:=|(\chi_{\{m\}}(n))_{n\in \mathbb{N}}|_{\infty}
    =1$.
    On the other hand, if $t\in [0,1]\setminus \mathbb{Q}$, we see that
    $|F(t)|_{\infty}:=|(0)_{n\in \mathbb{N}}|_{\infty} = 0$.
    Thus, $F$ is a bounded path.
    \item Let $\epsilon>0$. Let $\mathcal{P}$ be a partition of $[0,1]$, 
    with $\mathcal{P}:=\{I_{1},\ldots,I_{\#\mathcal{P}}\}$, where
    $0\leq \text{diam}(\mathcal{P})<\epsilon$.
    Let $\{x_{1},\ldots,x_{\#\mathcal{P}}\}$ be a finite set such that 
    for all $i\in \{1,\ldots,\#\mathcal{P}\}$, $x_{i}\in I_{i}$. First,
    we suppose that for all $i\in \{1,\ldots,\#\mathcal{P}\}$, 
    $x_{i}\in [0,1]\setminus \mathbb{Q}$. Then, 
    \begin{align*}
            \left|
            \sum_{i=1}^{\#\mathcal{P}}F(x_{i})\cdot \text{vol}(I_{i}) 
            \right|_{\infty}
            :=
            \left|
            \sum_{i=1}^{\#\mathcal{P}}(0)_{n\in \mathbb{N}}\cdot \text{vol}(I_{i}) 
            \right|_{\infty}
            =
            \left|
            (0)_{n\in \mathbb{N}} 
            \right|_{\infty}
            =0<\epsilon.
    \end{align*}
    We now assume that $J:=\{l\in \{1,\ldots,\#\mathcal{P}\}; x_{l}\in \mathbb{Q}\}
        \neq \emptyset$. So, for all $m\in J$, there exists $k_{m}\in \mathbb{N}$
        such that $x_{m}=r_{k_{m}}$. Hence,
        \begin{align*}
            \left|
            \sum_{i=1}^{\#\mathcal{P}}F(x_{i})\cdot \text{vol}(I_{i}) 
            \right|_{\infty}
            &:=
            \left|
            \sum_{i\in J}F(x_{i})\cdot \text{vol}(I_{i}) 
            \right|_{\infty}\\
            &:=
            \left|
            \sum_{i\in J}(\chi_{\{k_{i}\}}(n))_{n\in \mathbb{N}}\cdot \text{vol}(I_{i}) 
            \right|_{\infty}\\
            &=
            \left|
            \left(\sum_{i\in J}\chi_{\{k_{i}\}}(n)\cdot \text{vol}(I_{i})\right)_{
            n\in \mathbb{N}} 
            \right|_{\infty}\\
            &=\max\{\text{vol}(I_{i}); i\in J\}\\
            &\leq\text{diam}(\mathcal{P})\\
            &<\epsilon.
        \end{align*}    
    Consequently, 
    $F$ is Riemann-integrable,
    and 
    \[
        \int_{0}^{1}F(t)dt = (0)_{n\in \mathbb{N}}\in \ell^{\infty}(\mathbb{N}). 
    \]
    \item Finally, since  
    $|F|_{\infty}=\chi_{\mathbb{Q}\,\cap\, [0,1]}:[0,1]\longrightarrow 
    [0,+\infty)$, we see that
    \[
        \hspace{1
        cm}
        \lambda(\{x\in [0,1]; |F|_{\infty}=\chi_{\mathbb{Q}\,\cap\, [0,1]} 
        \text{ is discontinuous at } x\}) = \lambda([0,1]) = 1>0,
    \]
    where $\lambda$ is the Lebesgue measure
    on $\mathbb{R}$. Therefore,
    $|F|_{\infty}$ is not Riemann-integrable. \qedhere
\end{itemize}
\end{proof}

%%%%%%%%%%%%%%%%%%%%%%%%%%%%%%%%%%%%%%%%%%%%%%%%%%%%%%%%%%%%%%%%%%%%%%%%%%%%%%%%%%%%%%%%
%%%%%%%%%%%%%%%%%%%%%%%%%%%%%%%%%%%%%%%%%%%%%%%%%%%%%%%%%%%%%%%%%%%%%%%%%%%%%%%%%%%%%%%%

\end{document}